\newtheorem{thm}{Theorem}[section]
\newtheorem{example}[thm]{Example}
\newdefinition{defn}[thm]{Definition}
\journal{IEEE Access}
\begin{document}
	\begin{frontmatter}
	\title{On the paper  “Optimal dual frames of probabilistic erasures" }
        \author{Shankhadeep Mondal}
		\ead{shankhadeep.mondal@ucf.edu}
		\author{Ram Narayan Mohapatra}
		\ead{Ram.Mohapatra@ucf.edu}

		\address{University of Central Florida}
		\cortext[Ram]{Corresponding Author: Ram.Mohapatra@ucf.edu}

\begin{abstract}
In the paper “Optimal Dual Frames for Probabilistic Erasures” \cite{li1}, the authors have given conditions under which the canonical dual is claimed to be the unique probability optimal dual for $1-$erasure reconstruction. In this paper, we demonstrate via counterexamples that the conditions provided are not sufficient to guarantee uniqueness. We also noticed a mistake in the proof of the theorem and proved the correct version of the theorem with a stronger but valid condition. Furthermore, we show that the corollary asserting uniqueness for a tight frame assumption is also incorrect. Our results refine the understanding of probability optimal dual frame constructions and offer a more complete characterization of the $1-$erasure probability optimal duals.
\end{abstract}

		\begin{keyword}
			Codes, erasures, reconstruction, optimal dual frames.
			\MSC[2010] 42C15, 47B02, 94A12
		\end{keyword}
		
	\end{frontmatter}
	
\section{Introduction}

Frames offer redundant yet stable representations in Hilbert spaces, making them highly effective in applications such as communications, signal processing, and error correction. Their redundancy ensures robustness against coefficient loss, which enhances reliability in applications subject to erasures. 

In \cite{li1}, Li et al. investigated the problem of recovering signals from probabilistically erased frame coefficients. They provided sufficient conditions under which the canonical dual frame is optimal and proposed criteria for selecting alternate duals based on probabilistic weights. These results are significant for practical scenarios where frame coefficients may be lost with varying probabilities.

While the contributions in \cite{li1} are impactful, we identify flaws in their main uniqueness result. Specifically, Theorem~1 and its corollaries claim uniqueness of the canonical dual to be probability optimal under certain assumptions. However, we show that these conditions are insufficient to guarantee uniqueness by constructing counterexamples where multiple duals yield the same optimal reconstruction error. We then propose a corrected version of the theorem with stronger assumptions.

We also examine Corollary~1 and Corollary~2 from \cite{li1}, showing that they do not guarantee uniqueness as claimed. In both cases, we provide explicit counterexamples and characterize the full set of optimal duals.

This paper is organized as follows: Section~2 briefly reviews key definitions and the probabilistic erasure model. Section~3 presents counterexamples to the original results and a corrected theorem, along with a complete description of the optimal dual set in the tight frame case.

\medskip

{\small
\noindent \textbf{Related Works:} For broader context on optimal dual frames and erasures, we refer the reader to foundational works such as \cite{holm, leng3, bodm1, peh, sali, jerr, goya, jins, jerr, dev, ara, shan2}, and on probabilistic models of erasures in \cite{leng, li}. These works lay the groundwork for the investigation of dual frame robustness, including deterministic and probabilistic measures of error. Our analysis continues along this path, specifically refining the framework introduced in \cite{li1}.
}

\section{Preliminaries}

Let $\mathcal{H}$ be an $N$-dimensional Hilbert space.  A finite sequence \( F = \{f_i\}_{i=1}^M \) in a Hilbert space \( \mathcal{H} \) is called a \emph{frame} if there exist constants \( A, B > 0 \) such that for all \( f \in \mathcal{H} \),
\[
A\|f\|^2 \leq \sum_{i=1}^M |\langle f, f_i \rangle|^2 \leq B\|f\|^2.
\]
The constants \( A \) and \( B \) are known as the lower and upper frame bounds. If \( A = B \), the frame is called \emph{tight}, and if \( A = B = 1 \), it is called a \emph{Parseval frame}. Associated with a frame \( F \) is the \emph{analysis operator} \( \Theta: \mathcal{H} \to \mathbb{C}^M \), defined by \( \Theta_{F}(f) = \{\langle f, f_i \rangle\}_{i=1}^M \), and its adjoint, the \emph{synthesis operator} \( \Theta_F^*: \mathbb{C}^M \to \mathcal{H} \), given by \( \Theta_F^*(\{c_i\}) = \sum_{i=1}^M c_i f_i \). The \emph{frame operator} \( S: \mathcal{H} \to \mathcal{H} \), defined by \( S = \Theta_F^* \Theta_F \), satisfies
\[
S f = \sum_{i=1}^N \langle f, f_i \rangle f_i,
\]
and is linear, positive, self-adjoint, and invertible.

A sequence \( G = \{g_i\}_{i=1}^M \) is called a \emph{dual frame} of \( F \) if every \( f \in \mathcal{H} \) can be reconstructed as
\[
f = \sum_{i=1}^N \langle f, f_i \rangle g_i = \sum_{i=1}^M \langle f, g_i \rangle f_i.
\]
This is equivalent to the condition \( \Theta_G^* \Theta_F = I \). In this case, \( F \) is also a dual of \( G \), and the pair \( (F, G) \) is referred to as an \((M, N)\) dual pair in the \( n \)-dimensional space \( \mathcal{H} \).

For a given dual frame $G = \{g_i\}_{i=1}^M$ of $F$, the \emph{probabilistic erasure error for $1-$erasure} under the weight number sequence $\{q_i\}_{i=1}^M$ is measured by
\[
r_1^q(F,G) = \max_{1 \leq i \leq M} q_i \cdot |\langle f_i, g_i \rangle|.
\]
A dual frame $G$ is said to be \textit{1-erasure probability optimal} if it minimizes $r_1^q(F,G)$ among all duals of $F$.

\section{Revisiting Theorem and Corollaries: Counterexamples and Corrections}
We take the weight number sequence $\{q_i\}_{i=1}^N$ to be a sequence of scalars where
\begin{equation}\label{eqn1}
    q_i = \dfrac{\sum\limits_{j=1}^N p_j}{\sum\limits_{j=1}^N p_j - p_i}. \dfrac{M-1}{N}.
\end{equation}
So $\{q_i\}_{i=1}^N$  satisfying

	\begin{enumerate}
	\item [{\em (i)}] $1\leq q_i < \infty,\; 1 \leq i \leq N ,$
	\item [{\em (ii)}] $\sum\limits_{i=1}^N \frac{1}{q_i} =n.$
        \item [{\em (iii)}] The larger is the number $p_i,$ the larger is the number $q_i$ for all $1 \leq i \leq N.$
\end{enumerate}
	
\subsection{Modified Theorem }

\noindent Let $F = \{f_i\}_{i=1}^M$ be a frame for a Hilbert space $\mathcal{H}$, and let $\{q_i\}_{i=1}^M$ be a sequence of positive weights. The following theorem provides a sufficient condition under which the canonical dual of $F$ is a 1-erasure probability optimal dual with respect to the given weights.
    
Let us define $c = \max \left\{ \sqrt{q_i} \left\| S_F^{-1/2} f_i \right\|:1 \leq i \leq M \right\},$ and let
\[
\Lambda_1 := \left\{ i \in \{1,\dots,N\} : \sqrt{q_i} \left\| S_F^{-1/2} f_i \right\| = c \right\}, \quad
\Lambda_2 := \{1, 2, \dots, N\} \setminus \Lambda_1.
\]
Define the subspaces $H_j := \operatorname{span} \{ f_i : i \in \Lambda_j :  \},$ for $j=1,2.$ 

\begin{thm}\label{thm3point1}
Let $F = \{f_i\}_{i=1}^M$ be a frame for $\mathcal{H}$ and let $\{q_i\}_{i=1}^M$ be  weight number sequence given by \eqref{eqn1}. If $H_1 \cap H_2 = \{0\}$, then the canonical dual $S_{F}^{-1}F$ is a $1-$erasure probability optimal dual of $F$ for $1-$erasure.
\end{thm}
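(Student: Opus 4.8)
The plan is to show that every dual frame $G=\{g_i\}_{i=1}^M$ of $F$ satisfies $r_1^q(F,G)\ge r_1^q\!\left(F,S_F^{-1}F\right)$; since the canonical dual is itself a dual, this yields the assertion. The first step is to evaluate the canonical error. The $i$-th canonical dual vector is $S_F^{-1}f_i$, and $\langle f_i,S_F^{-1}f_i\rangle=\langle S_F^{-1/2}f_i,S_F^{-1/2}f_i\rangle=\|S_F^{-1/2}f_i\|^2$, so
\[
r_1^q\!\left(F,S_F^{-1}F\right)=\max_{1\le i\le M}q_i\left\|S_F^{-1/2}f_i\right\|^2=c^2 .
\]
Thus it suffices to prove $r_1^q(F,G)\ge c^2$ for an arbitrary dual $G$.

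Next I would use the standard parametrization of the duals of $F$: writing $g_i=S_F^{-1}f_i+w_i$, the duality relation $\Theta_G^*\Theta_F=I$ is equivalent to $\sum_{i=1}^M\langle f,w_i\rangle f_i=0$ for every $f\in\mathcal H$ (equivalently $\Theta_W^*\Theta_F=0$). Suppose, towards a contradiction, that $r_1^q(F,G)<c^2$, so that $q_i|\langle f_i,g_i\rangle|<c^2$ for \emph{all} $i$. Fix $i\in\Lambda_1$; then $q_i a_i=c^2$ where $a_i:=\|S_F^{-1/2}f_i\|^2=c^2/q_i>0$, while $\langle f_i,g_i\rangle=a_i+z_i$ with $z_i:=\langle f_i,w_i\rangle$. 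The inequality becomes $|a_i+z_i|<a_i$, and squaring gives $2a_i\operatorname{Re}(z_i)+|z_i|^2<0$, hence $\operatorname{Re}(z_i)<0$ for every $i\in\Lambda_1$.

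The decisive step uses the hypothesis $H_1\cap H_2=\{0\}$. Partitioning $\{1,\dots,M\}=\Lambda_1\cup\Lambda_2$ in the identity $\sum_{i=1}^M\langle f,w_i\rangle f_i=0$, the partial sum over $\Lambda_1$ lies in $H_1$ and the partial sum over $\Lambda_2$ lies in $H_2$; since these add to $0$ and $H_1\cap H_2=\{0\}$, each vanishes separately, so in particular
\[
\sum_{i\in\Lambda_1}\langle f,w_i\rangle f_i=0\qquad\text{for all }f\in\mathcal H .
\]
This says the operator $f\mapsto\sum_{i\in\Lambda_1}\langle f,w_i\rangle f_i$ on $\mathcal H$ is the zero operator, and computing its trace in an orthonormal basis gives $\sum_{i\in\Lambda_1}\langle f_i,w_i\rangle=0$, i.e.\ $\sum_{i\in\Lambda_1}z_i=0$. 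Taking real parts contradicts $\operatorname{Re}(z_i)<0$ for all $i\in\Lambda_1$ (the set $\Lambda_1$ being nonempty, as the argmax of a finite family). Hence $r_1^q(F,G)\ge c^2$ for every dual $G$, and the canonical dual is $1$-erasure probability optimal.

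I expect the main obstacle to be the third step — recognizing that $H_1\cap H_2=\{0\}$ is exactly what is needed to isolate the "variation-space" constraint carried by the indices in $\Lambda_1$, after which the trace identity closes the argument. The remaining ingredients (the value of the canonical error, the parametrization of duals, and the elementary implication $|a_i+z_i|<a_i\Rightarrow\operatorname{Re}(z_i)<0$) are routine. I would also remark that the specific weights \eqref{eqn1} are not used beyond positivity, so the statement in fact holds for an arbitrary sequence of positive weights; this is worth stating explicitly.
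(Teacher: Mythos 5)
Your proof is correct and follows essentially the same route as the paper's: the parametrization $g_i=S_F^{-1}f_i+u_i$, the use of $H_1\cap H_2=\{0\}$ to isolate $\sum_{i\in\Lambda_1}\langle f,u_i\rangle f_i=0$, and the trace identity $\sum_{i\in\Lambda_1}\langle f_i,u_i\rangle=0$ combined with the sign of the real parts. Your contradiction formulation (deducing $\operatorname{Re}(z_i)<0$ for all $i\in\Lambda_1$ from $|a_i+z_i|<a_i$) is just a repackaging of the paper's direct case analysis, and your closing remark that only positivity of the weights is used is accurate.
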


\begin{proof}
Let $G = \{g_i\}_{i=1}^M$ be a dual of $F$ of the form $g_i = S_F^{-1}f_i + u_i \;$ for each $i$, where  $\{u_i\}_{i=1}^M$ satisfy
\[
\sum_{i=1}^M \langle f, u_i \rangle f_i = 0 \quad \text{for all } f \in \mathcal{H}.
\]
Using the assumption that $H_1 \cap H_2 = \{0\}$, it follows that
\[
\sum_{i \in \Lambda_1} \langle f, u_i \rangle f_i = 0 = \sum_{i \in \Lambda_2} \langle f, u_i \rangle f_i\quad \text{for all } f \in \mathcal{H}.
\]
This implies that
\[
\Theta_{F_1}^* \Theta_{U_1} = 0,
\]
where $F_1 = \{f_i\}_{i \in \Lambda_1}$ and $U_1 = \{u_i\}_{i \in \Lambda_1}$. Taking the trace, we obtain
\[
\operatorname{tr}\left(\Theta_{F_1}^* \Theta_{U_1}\right) = \operatorname{tr}\left(\Theta_{U_1} \Theta_{F_1}^*\right) = \sum_{i \in \Lambda_1} \langle f_i, u_i \rangle = 0.
\]
Therefore,
\begin{equation} \label{eq:tracezero}
\operatorname{Re} \left( \sum_{i \in \Lambda_1} \langle f_i, u_i \rangle \right) = 0.
\end{equation}

Now observe that:
\[
\max_{1 \leq i \leq N} q_i |\langle f_i, g_i \rangle|
\geq \max_{i \in \Lambda_1} q_i |\langle f_i, S_F^{-1} f_i \rangle + \langle f_i, u_i \rangle|
= \max_{i \in \Lambda_1} \left| c^2 + q_i \langle f_i, u_i \rangle \right|.
\]

If $\operatorname{Re} \langle f_i, u_i \rangle = 0$ for all $i \in \Lambda_1$, then $\max_{1 \leq i \leq M} q_i |\langle f_i, g_i \rangle| \geq c^2.$ Otherwise, if there exists $j \in \Lambda_1$ such that $\operatorname{Re} \langle f_j, u_j \rangle > 0$, then
\[
\max_{1 \leq i \leq N} q_i |\langle f_i, g_i \rangle| > c^2.
\]
On the other hand, if $\operatorname{Re} \langle f_j, u_j \rangle < 0$ for some $j \in \Lambda_1$, then from \eqref{eq:tracezero}, there must exist some $j' \in \Lambda_1$ such that $\operatorname{Re} \langle f_{j'}, u_{j'} \rangle > 0$, ensuring again that the maximum exceeds $c^2$.

Therefore, $\max_{1 \leq i \leq M} q_i |\langle f_i, g_i \rangle| \geq c^2,$ for any dual $G$ of $F,$ proving that the canonical dual is a $1-$erasure probability optimal dual of $F$.

\end{proof}

\subsection{Counterexample of Theorem~1}

In this section, we present a counterexample demonstrating that Theorem~1 in the paper \cite{li1} is not valid as stated. The theorem claims that if $F = \{f_i\}_{i=1}^M$ is a frame for $\mathcal{H}$, the subspaces $H_1 \cap H_2 = \{0\}$, and the set $\{f_i\}_{i \in \Lambda_2}$ is linearly independent, then the canonical dual $\{S_{F}^{-1}f_i\}_{i=1}^M$ is the \emph{unique} 1-erasure probability optimal dual of $F$. 

We construct a frame $F$ and a weight sequence $\{q_i\}$ that satisfy all hypotheses of the theorem. We then show that, in addition to the canonical dual, there exists a continuum of distinct duals  which are also $1-$erasure probabilistic optimal. Therefore, the uniqueness part of the theorem fails, despite the other conditions being met.

\begin{example}
Let $F = \{f_1,f_2,f_3, f_4, f_5\}$ be a frame for $\mathbb{C}^4$ given by
\[
\begin{aligned}
f_1 = \begin{bmatrix} 1 \\ 0 \\ 0 \\ 0 \end{bmatrix}, \quad
f_2 = \begin{bmatrix} 1 \\ 1 \\ 0 \\ 0 \end{bmatrix}, \quad
f_3 = \begin{bmatrix} 1 \\ 2 \\ 0 \\ 0 \end{bmatrix}, \quad
f_4 = \begin{bmatrix} 0 \\ 0 \\ 1 \\ 2 \end{bmatrix}, \quad
f_5 = \begin{bmatrix} 0 \\ 0 \\ 2 \\ 1 \end{bmatrix},
\end{aligned}
\]
with the weight sequence $\{q_i\}_{i=1}^5 = \left\{ \frac{7}{5}, \frac{7}{2}, 1, 1, 1 \right\}$.
\end{example}
The frame operator of $F$ is
\[
S = \begin{bmatrix}
3 & 3 & 0 & 0 \\
3 & 5 & 0 & 0 \\
0 & 0 & 5 & 4 \\
0 & 0 & 4 & 5
\end{bmatrix}.
\]
The canonical dual of $F$ is given by $S^{-1}F = \{S^{-1}f_i\}_{i=1}^5,$ where
\[
\begin{aligned}
S^{-1}f_1 = \begin{bmatrix} \frac{5}{6} \\ -\frac{1}{2} \\ 0 \\ 0 \end{bmatrix}, \quad
S^{-1}f_2 = \begin{bmatrix} \frac{1}{3} \\ 0 \\ 0 \\ 0 \end{bmatrix}, \quad
S^{-1}f_3 = \begin{bmatrix} \frac{1}{6} \\ \frac{1}{2} \\ 0 \\ 0 \end{bmatrix}, \quad
S^{-1}f_4 = \begin{bmatrix} 0 \\ 0 \\ -\frac{1}{3} \\ \frac{2}{3} \end{bmatrix}, \quad
S^{-1}f_5 = \begin{bmatrix} 0 \\ 0 \\ \frac{2}{3} \\ -\frac{1}{3} \end{bmatrix}.
\end{aligned}
\]

We compute the weighted inner products:
\[
q_1\langle f_1, S^{-1}f_1 \rangle = q_2\langle f_2, S^{-1}f_2 \rangle = q_3\langle f_3, S^{-1}f_3 \rangle = \frac{7}{6}, \quad
q_4\langle f_4, S^{-1}f_4 \rangle = q_5\langle f_5, S^{-1}f_5 \rangle = 1.
\]

Thus, $\sqrt{q_1}\left\| S^{-1/2} f_1 \right\|= \sqrt{q_1\langle f_1, S^{-1}f_1 \rangle} = \sqrt{\frac{7}{6}},\quad \sqrt{q_2}\left\| S^{-1/2} f_2 \right\|= \sqrt{q_3}\left\| S^{-1/2} f_3 \right\| =  \sqrt{\frac{7}{6}}, \quad \sqrt{q_4}\left\| S^{-1/2} f_4 \right\| = \sqrt{q_5}\left\| S^{-1/2} f_5 \right\| = 1.$ Therefore, $c= \sqrt{\frac{7}{6}} , \;\Lambda_1 = \{1,2,3\}, \,\Lambda_2 = \{4,5\}.$ Note that $\{f_4, f_5\}$ is linearly independent. By Theorem~\ref{thm3point1}, the canonical dual is a $1$-erasure probability optimal dual of $F$.

Now consider another dual $G = \{g_i\}_{i=1}^5 = \{S^{-1}f_i +u_i\}_{i=1}^5$ of $F$ defined by
\[
\begin{aligned}
g_1 = \begin{bmatrix} \frac{5}{6} \\ -\frac{1}{2} \\ 1 \\ 1 \end{bmatrix} , \quad
g_2 = \begin{bmatrix} \frac{1}{3} \\ 0 \\ -2 \\ -2 \end{bmatrix}, \quad
g_3 = \begin{bmatrix} \frac{1}{6} \\ \frac{1}{2} \\ 1 \\ 1 \end{bmatrix}, \quad
g_4 = \begin{bmatrix} 0 \\ 0 \\ -\frac{1}{3} \\ \frac{2}{3} \end{bmatrix} , \quad
g_5 = \begin{bmatrix} 0 \\ 0 \\ \frac{2}{3} \\ -\frac{1}{3} \end{bmatrix} .
\end{aligned}
\]

It is easy to verify that $G$ is a dual of $F$, since $\sum_{i=1}^5 \langle f, f_i \rangle u_i = 0$ for all $f \in \mathcal{H}$, where $u_i = g_i - S_F^{-1} f_i$.

Moreover,
\[
q_1\langle f_1, g_1 \rangle = q_2\langle f_2, g_2 \rangle = q_3\langle f_3, g_3 \rangle = \frac{7}{6}, \quad
q_4\langle f_4, g_4 \rangle = q_5\langle f_5, g_5 \rangle = 1.
\]
Thus, $G$ is also a $1$-erasure probability optimal dual of $F$.

In fact, any dual of $F$ of the form
\[
\{g_i\}_{i=1}^5 = \left\{
\begin{bmatrix} \frac{5}{6} \\ -\frac{1}{2} \\ 0 \\ 0 \end{bmatrix} + \begin{bmatrix} 0 \\ 0 \\ \alpha_1 \\ \alpha_2 \end{bmatrix},
\begin{bmatrix} \frac{1}{3} \\ 0 \\ 0 \\ 0 \end{bmatrix} + \begin{bmatrix} 0 \\ 0 \\ -2\alpha_1 \\ -2\alpha_2 \end{bmatrix},
\begin{bmatrix} \frac{1}{6} \\ \frac{1}{2} \\ 0 \\ 0 \end{bmatrix} + \begin{bmatrix} 0 \\ 0 \\ \alpha_1 \\ \alpha_2 \end{bmatrix},
\begin{bmatrix} 0 \\ 0 \\ -\frac{1}{3} \\ \frac{2}{3} \end{bmatrix},
\begin{bmatrix} 0 \\ 0 \\ \frac{2}{3} \\ -\frac{1}{3} \end{bmatrix}
\right\}, \quad \alpha_1, \alpha_2 \in \mathbb{C},
\]
is a $1$-erasure probability optimal dual of $F$. Hence, the set of all such duals is uncountable.

\subsection{Counterexample of Corollary 1 and 2 :}
We now present a counterexample to Corollary~1 from the paper \cite{li1}. The corollary asserts that for a given tight frame $F = \{f_i\}_{i=1}^M$ for a Hilbert space $\mathcal{H}$ and a weight sequence $\{q_i\}_{i=1}^M$, the canonical dual $\{S^{-1}f_i\}_{i=1}^M$ is the \emph{unique} 1-erasure probabilistically optimal dual if and only if the set $\Lambda_2$  is empty.

In the following example, we construct a tight frame $F$ for $\mathbb{C}^2$ with frame bound $3$ and a weight sequence $\{q_i\}_{i=1}^4$ satisfying the conditions of the corollary. We show that the canonical dual is indeed $1-$erasure probabilistically optimal, but not uniquely so. Specifically, we identify an uncountable family of duals of $F$ that also attain the same optimality criterion. This directly contradicts the uniqueness claim in Corollary~1, even under the assumption that $\Lambda_2 = \emptyset$.

\begin{example}\label{eg3point3}
	Consider the tight frame $F= \{f_i\}_{i=1}^4 = \left\{\left[\begin{array}{l}
		1 \\0
	\end{array}\right],  \left[\begin{array}{l}
	0 \\ 1
\end{array}\right] , \left[\begin{array}{l}
1 \\ 1
\end{array}\right] , \left[\begin{array}{l}
1 \\ -1
\end{array}\right] \right\} $ for $\mathbb{C}^2$ with frame bound $3$ 
	and the weight number sequence  $\{q_i\}_{i=1}^4 = \left\{ 3,3,\frac{3}{2} , \frac{3}{2}\right\}.$
\end{example}

In this case, we have 
$$S_{F}^{-1} F = \left\{ \left[\begin{array}{r} \frac{1}{3} \\~\\ 0 \end{array}\right], \left[\begin{array}{r} 0 \\~\\ \frac{1}{3} \end{array}\right], \left[\begin{array}{l} \frac{1}{3} \\~\\ \frac{1}{3}  \end{array}\right],  \left[\begin{array}{r} \frac{1}{3} \\~\\ -\frac{1}{3} \end{array}\right] \; \right\} $$
and $r_{1}^q (F,S_{F}^{-1}F) = 1.$ For any dual $G$ of $F,\;r_{1}^q (F,G) \geq 1.$ Suppose not. Then, $ q_i |\langle f_i , g_i \rangle | < 1 ,$ for all $1 \leq i \leq M.$ This implies that  $N= \displaystyle{\sum_{i=1}^M \langle f_i , g_i \rangle \leq \sum_{i=1}^M |\langle f_i , g_i \rangle | < \sum_{i=1}^M \frac{1}{q_i} =N },$ \; which is absurd. Hence, the canonical dual is a $1-$erasure probability optimal dual of $F$. Explicitly, any dual $G'$ of $F$ can be written as 

\begin{align}
	G''= \{ g''_i \}_{i=1}^4 = \left\{ \begin{bmatrix} \frac{1}{3} - \alpha -\beta  \\ -\gamma - \delta \end{bmatrix}, \begin{bmatrix} -\alpha +\beta \\ \frac{1}{3} -\gamma +\delta  \end{bmatrix}, \begin{bmatrix} \frac{1}{3} + \alpha \\ \frac{1}{3} +\gamma  \end{bmatrix}, \begin{bmatrix} \frac{1}{3} +\beta\\ -\frac{1}{3} +\delta \end{bmatrix}  \right\},
	\end{align}
where $\alpha, \beta, \gamma, \delta \in \mathbb{C}.$\\

Let \( F' = \{f'_i\}_{i=1}^M \) be a frame for a Hilbert space and \( G' = \{g'_i\}_{i=1}^M \) be a dual frame of \( F' \). Suppose  \( r_1^q(F', G') = 1 \). Then it must hold that
\[
q_i \langle f'_i, g'_i \rangle = 1, \quad \text{for all } 1 \leq i \leq M.
\]
To see this, assume by contradiction that there exists some \( j \in \{1, 2, \dots, M\} \) such that
\[
q_j \left| \langle f'_j, g'_j \rangle \right| < 1.
\]
Then 
\[
N = \left| \sum_{i=1}^M \langle f'_i, g'_i \rangle \right| \leq \sum_{i=1}^M \left| \langle f'_i, g'_i \rangle \right| < \sum_{i=1}^M \frac{1}{q_i} = N,
\]
which leads to a contradiction. Hence, we conclude that
\[
q_i \left| \langle f'_i, g'_i \rangle \right| = 1 \quad \text{for all } i = 1, \dots, M.
\]

It now remains to show that \( \langle f'_i, g'_i \rangle \geq 0 \) for all \( i \). Observe that
\[
\sum_{i=1}^M \langle f'_i, g'_i \rangle = N = \sum_{i=1}^M \frac{1}{q_i} = \sum_{i=1}^M \left| \langle f'_i, g'_i \rangle \right|.
\]
Let us write \( \langle f'_i, g'_i \rangle = a_i + ib_i \), where \( a_i, b_i \in \mathbb{R} \). Then the above equality implies
\[
\sum_{i=1}^M a_i = \sum_{i=1}^M \sqrt{a_i^2 + b_i^2}.
\]
This is possible only if \( b_i = 0 \) and \( a_i \geq 0 \) for all \( i \), which gives \( \langle f'_i, g'_i \rangle \in \mathbb{R}_{\geq 0} ,\; 1\leq i \leq M\).


Now if $G''= \{ g''_i \}_{i=1}^4$ is a $1-$erasure probabilistic optimal dual of $F$ then $q_i \langle f_i,g''_i \rangle =1,\;1\leq i \leq 4.$ This leads to the following system of equations:
\begin{equation}
	\begin{array}{ccl}
		  \alpha + \beta &=& 0\\
		 -\gamma + \delta  &=&0 \\
		 \alpha +\gamma &=& 0\\
		  \beta -\delta  &=& 0,
	\end{array}
\end{equation}
for which the solution set is $\{(-w,w,w,w): w \in \mathbb{C}\}.$ Therefore, the set
$$ \left\{ \left\{ \left[\begin{array}{r} \frac{1}{3}  \\~\\ -2w \end{array}\right], \left[\begin{array}{r} 2w \\~\\ \frac{1}{3}  \end{array}\right], \left[\begin{array}{r} \frac{1}{3} -w \\~\\ \frac{1}{3}  + w \end{array}\right] , \left[\begin{array}{r} \frac{1}{3} + w \\~\\ -\frac{1}{3}  + w \end{array}\right]      \right\}: \; w \in \mathbb{C} \right\}, $$
is the set of $1-$erasure probabilistic  optimal duals of $F.$\\~\\

Now we provide a counterexample to Corollary~2 of \cite{li1}, which claims that the canonical dual is the \emph{unique} 1-erasure probability optimal dual if and only if $\sqrt{q_i} \|f_i\|$ is constant for all $i$.

\begin{example}
Consider the same tight frame and weight sequence as in Example~3.3:
\[
F = \left\{ 
\begin{bmatrix}1\\0\end{bmatrix}, 
\begin{bmatrix}0\\1\end{bmatrix}, 
\begin{bmatrix}1\\1\end{bmatrix}, 
\begin{bmatrix}1\\-1\end{bmatrix}
\right\}, \quad 
\{q_i\}_{i=1}^4 = \left\{3, 3, \tfrac{3}{2}, \tfrac{3}{2} \right\}.
\]
Then,
\[
\sqrt{q_i} \|f_i\| = \sqrt{3}, \quad \text{for all } i=1,\dots,4.
\]
So the hypothesis of Corollary~2 is satisfied. However, as shown in Example~3.3, the set of all duals $G = \{g_i\}_{i=1}^4$ for which $q_i \langle f_i, g_i \rangle = 1$ for all $i$ is uncountable. Thus, the uniqueness conclusion of the canonical dual in Corollary~2 fails.
\end{example}

\vspace{1cm}

\noindent
	{\bf Acknowledgments:} 
	The authors are grateful to the Mohapatra Family Foundation and the College of Graduate Studies of the University of Central Florida for their support during this research.

\bibliographystyle{amsplain}

\begin{thebibliography}{10}













\bibitem{bodm1}
B.~G. Bodmann and V.~I. Paulsen,
\newblock {\em Frames, graphs and erasures}.
\newblock {Linear Algebra Appl.} \textbf{404} (2005), 118--146.


\bibitem{dev}
P.~Devaraj and S.~Mondal,
\newblock {\em Spectrally optimal dual frames for erasures}.
\newblock {Proc. Indian Acad. Sci. (Math. Sci.)} \textbf{133} (2023).

\bibitem{goya}
V.~K. Goyal, J.~Kova{\v{c}}evi{\'c} and J.~A. Kelner,
\newblock {\em Quantized frame expansions with erasures}.
\newblock {Appl. Comput. Harmon. Anal.} \textbf{10} (2001), 203--233.








\bibitem{holm}
R.~B. Holmes and V.~I. Paulsen,
\newblock {\em Optimal frames for erasures}.
\newblock {Linear Algebra Appl.} \textbf{377} (2004), 31--51.










 






\bibitem{jins}
J.~Leng and D.~Han,
\newblock {\em Optimal dual frames for erasures {II}}.
\newblock {Linear Algebra Appl.} \textbf{435} (2011), 1464--1472.

\bibitem{leng3}
J.~Leng, D.~Han and T.~Huang,
\newblock {\em Optimal dual frames for communication coding with probabilistic
  erasures}.
\newblock {IEEE Trans. Signal Process.} \textbf{59} (2011), 5380--5389.

\bibitem{leng}
J.~Leng, D.~Han and T.~Huang,
\newblock {\em Probability modelled optimal frames for erasures}.
\newblock {Linear Algebra Appl.} \textbf{438} (2013), 4222--4236.


\bibitem{li1}
D.~Li, J.~Leng and M.~He,
\newblock {\em Optimal dual frames for probabilistic erasures}.
\newblock {IEEE Access} \textbf{7} (2019), 2774--2781.

\bibitem{li}
D. Li,  and J. Leng,  and T. Huang,  and Q. Gao,
\newblock {\em Frame expansions with probabilistic erasures}.
\newblock {Digital Signal Processing}  (2028), 75--82.

\bibitem{jerr}
J.~Lopez and D.~Han,
\newblock {\em Optimal dual frames for erasures}.
\newblock {Linear Algebra Appl.} \textbf{432} (2010), 471--482.

\bibitem{sali}
S.~Pehlivan, D.~Han and R.~Mohapatra,
\newblock {\em Linearly connected sequences and spectrally optimal dual frames for
  erasures}.
\newblock {J. Funct. Anal.} \textbf{265} (2013), 2855--2876.

\bibitem{ara}
Fahimeh Arabyani-Neyshaburi, Ali Akbar Arefijamaal, and Ghadir Sadeghi, \emph{Numerically and spectrally optimal dual frames in hilbert spaces}, Linear Algebra and its Applications,604 (2020), 52–71.

\bibitem{peh}
S.~Pehlivan, D.~Han and R.~Mohapatra, \emph{Spectrally two-uniform
  frames for erasures}, Oper. Matrices \textbf{9}, no.~2, 383--399 (2015).


\bibitem{shan2}
S. Arati,  and P. Devaraj, P and Shankhadeep Mondal, \emph{Optimal dual pairs of frames for erasures}, Linear and Multilinear Algebra (2025).

\bibitem{shan}
Arati, S. and Devaraj, P. and Mondal, S., \emph{Optimal dual frames and dual pairs for probability modelled erasures}, Advances in Operator Theory, 52--71 (2024).

\end{thebibliography}

\end{document}